\documentclass[11pt]{amsart}
\usepackage{graphicx}
\usepackage{amsfonts}
\usepackage{amssymb}
\usepackage{amsthm}
\usepackage[centertags]{amsmath}
\usepackage{newlfont}

\usepackage{color}
\usepackage{graphicx,color}
\usepackage{amsmath, amssymb, graphics}
 

\def\r{\mathbb R}

\newtheorem{theorem}{Theorem}[section]
\newtheorem{proposition}[theorem]{Proposition}

\newtheorem{lemma}[theorem]{Lemma}

\theoremstyle{definition}

\theoremstyle{remark}
\newtheorem{remark}[theorem]{Remark}

\numberwithin{equation}{section}



\begin{document}

 \title{A characteristic property of Delaunay surfaces}

\author{Thomas Hasanis}
\address{$^1$ Department of Mathematics\\
               University of Ioannina\\
               45110 Ioannina, Greece}
\email{thasanis@cc.uoi.gr}

\author{Rafael L\'opez}
\address{$^2$ Departamento de Geometr\'{\i}a y Topolog\'{\i}a\\
 Universidad de Granada\\
 18071 Granada, Spain }
\email{rcamino@ugr.es}
\thanks{Rafael L\'opez has  partially supported by  the grant no. MTM2017-89677-P, MINECO/AEI/FEDER, UE.}

\subjclass{Primary 53A10; Secondary 53C42}

\keywords{mean curvature, separable surface, Delaunay surface}
\date{}

\begin{abstract} 
    We prove that   Delaunay surfaces, except the plane and the catenoid, are the only surfaces  in Euclidean space with nonzero constant mean curvature    that can be expressed as an implicit equation of type $f(x)+g(y)+h(z)=0$, where $f$, $g$ and $h$ are smooth real functions of one variable.
\end{abstract}
 
\maketitle

\section{Introduction and statement of the result} 

We study surfaces in the Euclidean $3$-space $\r^3$ that can be expressed by an implicit equation of type 
\begin{equation}\label{sep}
 f(x)+g(y)+h(z)=0,
 \end{equation}
  where $f$, $g$ and $h$ are smooth functions of one variable. Here $(x,y,z)$ stand for the canonical coordinates of $\r^3$. In order to fix the terminology, a surface of $\r^3$ given by the implicit equation \eqref{sep} is called a {\it separable surface}.

 This paper is motivated by  the classical theory of minimal surfaces, where the study of the class of  separable minimal surfaces  has a long history and that we now summarize. First of all, the first two minimal surfaces discovered are separable surfaces, namely, the catenoid   $\cosh^2(z)=x^2+y^2$   by Euler in 1744, and    the helicoid $\tan(z)=y/x$   by Meusnier 1776.

Later, in 1835,   Scherk found new examples of minimal surfaces by using the technique of separation of variables (\cite{sc}). Among the examples, we point out the separable surfaces $e^z=\cos(y)/\cos(x)$ and $\sin(z)=\sinh(x)\sinh(y)$. In 1887,  Weingarten investigated the problem to determine all separable minimal surfaces realizing that form a rich and large family of surfaces (\cite{we}).  For example, this family contains a variety of  minimal surfaces given in terms of elliptic integrals (\cite{ca}) as well as  periodic minimal surfaces such as the    P-Schwarz surface and the D-Schwarz surface  (\cite{schwarz}).   In the middle of the above century,   Fr\'echet  gave a deep study of these surfaces obtaining   examples with explicit parametrizations (\cite{fr1,fr2}).   The   reader can see a historical approach to the topic of separable minimal surfaces in  the Nitsche's book   (\cite[II-5.2]{ni}).

If the class of separable minimal surfaces  has long been investigated by many authors,  the question of what are the separable surfaces with nonzero constant  mean curvature     has not been addressed nowdays in the literature. This paper covers this case and gives a complete classification of these surfaces in the following result.

\begin{theorem}\label{t1}
The Delaunay surfaces, except the plane and the catenoid, are the only separable surfaces in    Euclidean  space $\r^3$ with nonzero constant mean curvature  
 \end{theorem}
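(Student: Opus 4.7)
The plan is to write the CMC condition explicitly as a PDE in the three one-variable functions $f,g,h$, reduce it by a well-chosen change of variables, and then show the separated structure leaves no room for anything but quadratic $f$ and $g$ (up to permutation of the coordinates), so that $S$ is rotationally symmetric; Delaunay's classification then finishes the proof. Concretely, computing $2H=\operatorname{div}(\nabla F/|\nabla F|)$ for $F=f(x)+g(y)+h(z)$, the vanishing of all mixed partials of $F$ gives on $S=\{F=0\}$
\begin{equation}\label{pl1}
(g'^{2}+h'^{2})f''+(f'^{2}+h'^{2})g''+(f'^{2}+g'^{2})h''=2H(f'^{2}+g'^{2}+h'^{2})^{3/2}.
\end{equation}
On the open set where $f',g',h'$ are all nonzero, I would switch variables to $u=f(x)$, $v=g(y)$, $w=h(z)$, and introduce the first-integral profiles $P(u)=f'(x)^{2}$, $Q(v)=g'(y)^{2}$, $R(w)=h'(z)^{2}$. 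Using $f''(x)=\tfrac{1}{2}P'(u)$ and the analogues, \eqref{pl1} becomes
\[
(Q+R)P'+(P+R)Q'+(P+Q)R'=4H(P+Q+R)^{3/2}
\]
on $u+v+w=0$. Parametrizing the surface by $w=-u-v$ converts this into a constraint-free identity in two independent variables $u,v$.

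Differentiating this identity in $u$ and in $v$ and subtracting cancels the $(P+Q)R''$ contribution and yields
\[
(Q+R)P''-(P+R)Q''=(P'-Q')\bigl(6H\sqrt{P+Q+R}-R'\bigr),
\]
together with the two analogues obtained by cyclically permuting $(u,v,w)$. The goal is to argue from these three identities that two of $P''$, $Q''$, $R''$ vanish identically. Once this is achieved — say $P''\equiv Q''\equiv 0$ — the functions $f$ and $g$ are quadratic polynomials, substituting back into \eqref{pl1} forces their leading coefficients to coincide, and after translating $x$ and $y$ we may write $f(x)+g(y)=a\bigl((x-x_{0})^{2}+(y-y_{0})^{2}\bigr)+c$. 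Thus $S$ depends on $(x,y)$ only through $\rho=\sqrt{(x-x_{0})^{2}+(y-y_{0})^{2}}$, so $S$ is a surface of revolution about the vertical line through $(x_{0},y_{0})$, and Delaunay's theorem identifies it as a sphere, right circular cylinder, unduloid, or nodoid, which are precisely the Delaunay surfaces with $H\neq 0$.

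The main obstacle is the algebraic step that forces $P''$ and $Q''$ to vanish, because the irrational term $\sqrt{P+Q+R}$ mixes the dependence on $u$, on $v$, and on $u+v$ in a subtle way. My approach would be to isolate the square root and square, turning the identity into a polynomial identity in the one-variable functions $P,Q,R$ and their derivatives, then differentiate once more in $u$ and $v$ and exploit the fact that monomials in $P',Q',R',P'',Q'',R''$ with different ``variable profiles'' must vanish independently; the permutation symmetry among the three axes is essential here, as it lets one transfer any residual obstruction to a different coordinate and rule it out by the same argument. Finally, the degenerate cases in which $f'$, $g'$, or $h'$ vanishes on an open set must be treated at the outset: if, say, $h'\equiv0$, then $S$ is a right cylinder over the planar curve $f(x)+g(y)=\text{const.}$, and the CMC condition reduces to that curve having constant nonzero curvature, so it is a circle and $S$ a circular cylinder — again a Delaunay surface. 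The cases in which two of $f',g',h'$ vanish identically give planes, which have $H=0$ and are excluded from the conclusion.
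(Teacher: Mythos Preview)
Your strategy is the paper's strategy: the same change of variables $u=f,v=g,w=h$, the same first integrals $P=f'^2,Q=g'^2,R=h'^2$, the same master equation, and the same first differentiation step (your identity $(Q+R)P''-(P+R)Q''=(P'-Q')(6H\sqrt{P+Q+R}-R')$ is exactly the paper's after applying its Lemma~2.1). Your handling of the cylinder case and of the endgame (two of $P'',Q'',R''$ vanish $\Rightarrow$ quadratic $f,g$ with equal leading coefficient $\Rightarrow$ rotational) is also in line with the paper, which phrases the same thing as a contradiction with $(P'-Q')(Q'-R')(R'-P')\neq 0$ via Proposition~2.2.

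Where your proposal is thin is precisely the step you flag as the obstacle, and your suggested fix (``monomials with different variable profiles must vanish independently'') is not quite the mechanism that works here. Because $R$ depends on $u+v$, after squaring you do \emph{not} get an expression that separates cleanly by which variable each factor depends on, so a naive variable-profile argument stalls. The paper's device is sharper: after squaring the first identity one obtains a \emph{cubic in $Z$} with coefficients built from $X,Y$ and their derivatives; since this cubic vanishes on the section, its (normalized) coefficients are symmetric functions of the roots $Z_i(w)$ and hence are functions of $w=-(u+v)$ alone. That observation lets one differentiate again (now legitimately, along $u$ and $v$ keeping $w$ implicit), substitute the expression for $Z'$, square once more, and read off a degree-seven polynomial in $Z$ whose leading coefficient is a nonzero constant. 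The subleading coefficient then gives the scalar relation
\[
24H^{2}(P'-Q')=(P'+Q')(P''-Q''),
\]
from which two more $u$- and $v$-derivatives force $P''Q''=0$, and by symmetry $Q''R''=R''P''=0$. So the missing idea is not ``separate by variable profile'' but ``treat the squared relation as a polynomial in one of the profiles with coefficients forced to depend only on $u+v$''. Once you have that, the rest of your outline goes through exactly as you wrote it.
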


The Delaunay surfaces are the   surfaces of revolution  with constant mean curvature. In 1841,  Delaunay  provided a geometric method of construction of these surfaces (\cite{de}).  He proved that a rotational surface with constant mean curvature is obtained by rolling a conic   without slipping along a straight line. The curve described by the focus of the conic, called the roulette of the conic, is the generating curve of the rotational surface and the straight line is the axis of rotation. There are three types of Delaunay surfaces depending on the type of conic. 
  If the conic is a ellipse, then the surface is called an unduloid (including spheres and circular cylinders as special cases), if the conic is a hyperbola, the surface is called a nodoid and if the conic is a parabola, the   surface is a catenoid. Besides the plane, the catenoid is the only rotational minimal surface. 

 This note is organized as follows. In Section \ref{s2} we compute the mean curvature of a separable surface \eqref{sep} and we distinguish   the surfaces of revolution and the translation surfaces as special  cases in the proof of Theorem \ref{t1}. In  the last Section \ref{s3}, we prove Theorem \ref{t1}.

\section{Preliminaries}\label{s2}


Let $S$ be a  a separable surface   of the form \eqref{sep}, where $f$, $g$ and $h$ are   smooth functions defined in open intervals $I_1$, $I_2$ and $I_3$ of $\r$, respectively. 
 By the regularity of $S$,    $f'(x)^2+g'(y)^2+h'(z)^2>0$ for every $x\in I_1$, $y\in I_2$ and $z\in I_3$.  
 
 There are  two   special examples of separable surfaces that appear   by simples choices of the functions $f$, $g$ and $h$ in   \eqref{sep}.
 
 \begin{enumerate}

 \item {\it Rotational surfaces}. Let $S$ be a   rotational surface  in $\r^3$ whose rotation axis is one of the coordinate axes,   for instance,    the   $z$-axis. Then the surface $S$ writes as $h(z)=x^2+y^2$, hence that   $S$   is   separable.   A little more general, any surface of revolution about a straight line parallel to the $z$-axis is separable because its implicit equation is   $h(z)=x^2+y^2+ax+by+c$, with $a,b,c\in\r$.

\item {\it Translation surfaces}.   After renaming the coordinates $(x,y,z)$,   a translation surface $S$ is a surface   can be expressed as $z=\phi(x)+\psi(y)$, where $\phi$ and $\psi$ are  smooth functions.  The surface $S$ can be viewed  as   the sum of the plane curve $x\mapsto (x,0,\phi(x))$  and the plane curve $y\mapsto (0,y,\psi(y))$. Thus $S$ is generated when we move one of  these curves by means of the translations along the other one.    A separable surface \eqref{sep} is  a    translation surface  $z=\phi(x)+\psi(y)$  if and only the function $h$ is linear,  say,   $h(z)=az+b$, with $a,b\in\r$. In case   $a=0$, the surface is a right cylinder over a plane curve contained in the $xy$-plane.  
  
 \end{enumerate}

We calculate the mean curvature $H$ of  a separable surface. Firstly, we recall that any surface  $S$ is, locally, the zero level set $F(x,y,z)=0$ of a function $F$ defined in an open set $O\subset\r^3$. For computing the mean curvature $H$ of $S$ we fix an orientation on $S$, which here it is   $N=\nabla F/|\nabla F|$, where $\nabla$ is the Euclidean gradient in $\r^3$. With respect to $N$, the mean curvature is 
\begin{equation}\label{mean}
H=-\frac12\mbox{Div}(N)=-\frac12\mbox{Div}\left(\frac{\nabla F}{|\nabla F|}\right),
\end{equation}
where $\mbox{Div}$ is the Euclidean  divergence   in $\r^3$. 
If now the surface is separable given by \eqref{sep}, we have $\nabla F=(f',g',h')$. Here the prime (') stands for the derivative of the function with respect to its variable. Then the formula \eqref{mean} is now 
\begin{eqnarray*}-2H&=&  \left(\frac{f'}{\sqrt{f'^2+g'^2+h'^2}}\right)_x+\left(\frac{g'}{\sqrt{f'^2+g'^2+h'^2}}\right)_y+\left(\frac{h'}{\sqrt{f'^2+g'^2+h'^2}}\right)_z\\\
&=&\frac{1}{ (f'^2+g'^2+h'^2)^{3/2}}\left(f''(g'^2+h'^2)+g''(f'^2+h'^2)+h''(f'^2+g'^2)\right).
\end{eqnarray*}
 We write this equation as 
\begin{equation}\label{eq1}
f''(g'^2+h'^2)+g''(f'^2+h'^2)+h''(f'^2+g'^2)=-2H(f'^2+g'^2+h'^2)^{3/2}.
\end{equation}

We know that the  rotational surfaces with constant mean curvature are the Delaunay surfaces. If the $z$-axis is the rotation axis, then $h(z)=x^2+y^2$ and \eqref{eq1} is     simply 
$$-2H= \frac{8h+4h'^2-4hh''}{(4h+h'^2)^{3/2}}.$$
Multiplying by $h'$, we derive a first integral, namely, 
$$\frac{2h}{\sqrt{4h+h'^2}}=-Hh+c,$$
where $c\in\r$ is a constant of integration. This equation characterizes all Delaunay surfaces. For example, the catenoid ($H=0$) is $h(z)=\cosh^2(z)$, the sphere is $h(z)=1/H^2-z^2$ and the cylinder is $h(z)=1/(4H^2)$. 

On the other hand, the translation surfaces $z=\varphi(x)+\psi(y)$ with constant  mean curvature $H$ are known. If $H=0$, the surface is the plane of the   Scherk surface $e^z=\cos(y)/\cos(x)$, or equivalently, $z=\log(\cos(y))-\log(\cos(x))$.   If $H\not=0$,   then \eqref{eq1} is now 
$$(1+\psi'^2)\phi''+(1+\phi'^2)\psi''=-2H(1+\phi'^2+\psi'^2)^{3/2}.$$
Liu in \cite{li} proved that then $\psi$ (or $\phi)$ is a linear function, hence $(1+a^2)\phi''=-2H(1+a^2+\phi'^2)^{3/2}$ for some constant $a\in\r$ and the surface is  
\begin{equation}\label{cyl}
z=\frac{\sqrt{1+a^2}}{2H}\sqrt{1-4H^2 x^2}+a y,\quad a\in\r.
\end{equation}
This surface is nothing but a circular cylinder of radius $1/(2|H|)$ whose rotation axis is the $y$-axis once rotated in the $yz$-plane an angle $\theta$ with $\tan(\theta)=a$.     In particular, a circular cylinder is  a Delaunay surface.

In the proof of Theorem \ref{t1} that will be done in Section \ref{s3}, we will not use the functions $f$, $g$ and $h$ such as appear in \eqref{eq1}, but we will make a change of variables. For this purpose, firstly we need  to distinguish the case that  one of the functions $f$, $g$ or $h$ is constant. Without loss of generality, we suppose that $h$ is constant, so $h(z)=a$ for some $a\in\r$. By \eqref{sep},  the surface is given by the implicit equation $f(x)+g(y)+a=0$ and this shows that the surface is a right cylinder over the plane curve $C=\{(x,y)\in\r^2: f(x)+g(y)+a=0\}$. Then the mean curvature of $S$ is $H=\kappa/2$, where $\kappa$ is the curvature of $C$. If $H=0$ is a nonzero constant, then $\kappa=0$, $C$ is a straight line and $S$ is a plane. If    $H$ is a nonzero constant, then  $C$ is (part of) a circle of radius $r=1/(2|H|)$ and  $S$ is a circular cylinder. In this last case,  Theorem \ref{t1} is proved.

 In   what follows,  we will assume that none of the functions $f$, $g$ or $h$ is constant.  Equivalently, we have
\begin{equation}\label{fgh}
f'(x)g'(y)h'(z)\not=0
\end{equation}
everywhere in $\Omega=I_1\times I_2\times I_3$.  This allows to define   the new variables 
\begin{equation}\label{uvw}
u=f(x),\quad v=g(y),\quad w=h(z),
\end{equation}
which are related by the equation $u+v+w=0$ thanks to \eqref{sep}.

Let us introduce the  functions
$$X(u)=f'(x)^2,\quad Y(v)=g'(y)^2,\quad Z(w)=h'(z)^2.$$
Using \eqref{fgh}, we have   
$$X'(u)= 2f''(x), \quad Y'(v) =2g''(y), \quad Z'(w) =2h''(z).$$
 Thus  the equation \eqref{eq1} becomes
\begin{equation}\label{eq2}
(Y+Z)X'+(X+Z)Y'+(X+Y)Z'=-4H(X+Y+Z)^{3/2}.
\end{equation}
Throughout this paper we need to differentiate equations similar to \eqref{eq2} involving functions depending on $u$, $v$ and $w$. Since these variables are not independent because of $u+v+w=0$, the following lemma will be useful in our computations.

\begin{lemma} \label{le1}
Let $A=A(u,v,w)$ be a smooth function defined in a domain $O\subset\r^3$. If $A(u,v,w)=0$ for any triple of the section $O\cap\Pi$, where $\Pi$ is the plane of equation $u+v+w=0$, then on the section we have
$$A_u=A_v=A_w,$$
 where $A_u$, $A_v$ and $A_w$ are the derivatives of $A$ with respect to $u$, $v$ and $w$, respectively.
\end{lemma}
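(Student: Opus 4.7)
The plan is to reduce the constraint $u+v+w=0$ to an explicit parametrization of the plane $\Pi$, so that the hypothesis becomes the vanishing of an ordinary (unconstrained) function of two independent variables, from which one can freely read off partial derivatives.

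Concretely, I would choose $u$ and $v$ as the free parameters on $\Pi$ and write $w=-u-v$. Define the auxiliary function
\begin{equation*}
B(u,v)=A(u,v,-u-v),
\end{equation*}
which is smooth on the open set of $(u,v)\in\r^2$ such that $(u,v,-u-v)\in O$. By hypothesis $A$ vanishes identically on $O\cap\Pi$, so $B\equiv 0$ on its domain, and hence $B_u\equiv B_v\equiv 0$.

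Applying the chain rule at a point $(u,v,w)\in O\cap\Pi$ (so that $w=-u-v$) yields
\begin{equation*}
0=B_u=A_u(u,v,w)-A_w(u,v,w),\qquad 0=B_v=A_v(u,v,w)-A_w(u,v,w),
\end{equation*}
which gives $A_u=A_w$ and $A_v=A_w$, i.e.\ $A_u=A_v=A_w$ on the section, as desired.

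There is no real obstacle here; the statement is essentially the observation that the kernel of the restriction map to $\Pi$ is generated, at the level of first-order conditions, by the single relation $\partial_u+\partial_v+\partial_w$ orthogonal (in the dual sense) to the constraint gradient $(1,1,1)$. The only point to be slightly careful about is that the conclusion is pointwise on the section $O\cap\Pi$ rather than a global identity in $\r^3$, which is precisely what the chain-rule computation above delivers. Symmetry in the roles of $u,v,w$ makes the choice of parametrization irrelevant.
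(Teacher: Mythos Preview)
Your proof is correct and follows essentially the same approach as the paper: parametrize $\Pi$ by $(u,v)$ with $w=-u-v$, differentiate the identity $A(u,v,-u-v)=0$ with respect to the free parameters using the chain rule, and read off $A_u=A_w$ and $A_v=A_w$. The paper is slightly terser (it differentiates once with respect to $u$ and then appeals to symmetry in $u,v,w$), but the argument is the same.
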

\begin{proof}
Since $w=-u-v$, then $A(u,v,-u-v)=0$. Differentiating with respect to $u$, we deduce $A_u-A_w=0$.   Changing the roles of $u$, $v$ and $w$, we conclude the proof of the lemma. 
 \end{proof}

In the following result, we characterize  in terms of the functions $X$, $Y$ and $Z$ the particular cases in which  the surface is a translation or a rotational surface. 
  
\begin{proposition}\label{pr1}
 Let $S$ be a   separable  surface with $(X'-Y')(Y'-Z')(Z'-X')=0$ everywhere. Then $S$ is a rotational  or a translation surface.
\end{proposition}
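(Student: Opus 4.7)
The idea is to upgrade the pointwise hypothesis into an identity of one of the three equalities $X'\equiv Y'$, $Y'\equiv Z'$, or $Z'\equiv X'$ on an open subset of the slice $\Pi\cap\Omega$, then to integrate and complete the square to read off either the rotational or the translational structure. Parametrise the two-dimensional slice by $(u,v)$ (with $w=-u-v$) and introduce the closed sets
$$ E_1=\{X'(u)=Y'(v)\},\quad E_2=\{Y'(v)=Z'(w)\},\quad E_3=\{Z'(w)=X'(u)\}. $$
By hypothesis $E_1\cup E_2\cup E_3$ is the whole slice. If none of the $E_i$ had non-empty interior, each would be closed and nowhere dense, contradicting the Baire category theorem on the $2$-manifold slice. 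So some $E_i$, say $E_1$, contains a product of open intervals $J_1\times J_2$.

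On $J_1\times J_2$ the equation $X'(u)=Y'(v)$ separates variables: fixing $v$ forces $X'$ to be constant in $u$, and fixing $u$ forces $Y'$ to be constant in $v$, with a common value $c$. Since $X'(u)=2f''(x)$ and $Y'(v)=2g''(y)$, this means $f''(x)\equiv g''(y)\equiv c/2$ on the corresponding $x$- and $y$-intervals, so $f$ and $g$ are quadratic polynomials with the identical leading coefficient $c/4$. In the application of this proposition inside the proof of Theorem \ref{t1}, real-analyticity of CMC surfaces propagates this quadratic form from an open subinterval to the whole of $I_1$ and $I_2$.

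If $c\neq 0$, completing the square rewrites $f(x)+g(y)$ as $(c/4)\bigl((x-x_0)^2+(y-y_0)^2\bigr)+\gamma$, so the separable equation $f+g+h=0$ involves $(x,y)$ only through $(x-x_0)^2+(y-y_0)^2$ and is a surface of revolution about the vertical axis through $(x_0,y_0)$. If $c=0$, then $f$ and $g$ are linear and the equation reduces to $\alpha x+\beta y+h(z)+\gamma=0$; after an orthogonal rotation of the $xy$-plane this becomes $\alpha'x'+h(z)=\text{const}$, a right cylinder over a plane curve, i.e., a translation surface. The remaining possibilities, in which $E_2$ or $E_3$ has non-empty interior, are handled identically after relabelling the coordinates, yielding rotational surfaces about axes parallel to the $x$- or $y$-axis. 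The principal obstacle is the first step: extracting from the pointwise algebraic hypothesis an open-set identity for one of the three pairs, which is precisely what the Baire argument achieves; once that is in hand, the remainder is integration, completion of the square, and a rigid motion of $\r^3$.
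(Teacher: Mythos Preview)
Your argument follows the same core computation as the paper's: once one factor, say $X'-Y'$, vanishes on a region, separation of variables forces $X'\equiv Y'\equiv c$ there, and then $c=0$ gives linear $f,g$ (translation) while $c\neq0$ gives quadratic $f,g$ with equal leading coefficient (rotation about a line parallel to the $z$-axis). The paper simply writes ``without loss of generality, we suppose that $X'-Y'=0$ everywhere'' and proceeds from there; your Baire step (which for three closed sets is just the elementary fact that a finite union of nowhere-dense closed sets cannot cover an open set) makes explicit why one factor must vanish on an open subregion, a point the paper glosses over. The trade-off is that you obtain the rotational or translational form only on an open piece and then invoke real-analyticity of CMC surfaces to globalize; since the proposition is used only for $H\neq 0$ in Theorem~\ref{t1}, this is harmless for the application, though strictly speaking you have proved a local version of the proposition as stated. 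One minor remark: in the $c=0$ case no rotation of the $xy$-plane is needed, since by the paper's own convention a separable surface with $f$ (or $g$) linear already counts as a translation surface.
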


\begin{proof}
Without loss of generality, we suppose that $X'-Y'=0$ everywhere. Then there is a constant $a\in\r$ such that $X'=a=Y'$. If $a=0$, then $X$ and $Y$ are nonzero constants because \eqref{fgh}. So $f(x)=a_1x+b_1$ and $g(y)=a_2y+b_2$, where $a_i,b_i\in\r$ and, by using \eqref{fgh}, $a_i\not=0$. Then \eqref{sep} is $a_1x+a_2y+h(z)+b_1+b_2=0$ and this proves that $S$ is a translation surface. 

Assume now $a\not=0$. Then the solution of $X'=a=Y'$ yields
$$f'(x)^2=a f(x)+b,\quad g'(y)^2=ag(y)+b_2,$$
for some $b_1,b_2\in\r$. Then solutions of these ordinary differential equations are 
$$f(x)=\frac{(ax+c_1)^2}{4a}-\frac{b_1}{a},\quad g(y)=\frac{(ax+c_2)^2}{4a}-\frac{b_2}{a},$$
where $c_1,c_2\in\r$. Hence  the surface is a surface of revolution    with respect to a straight line parallel to  the $z$-axis.
\end{proof}

\section{Proof of Theorem \ref{t1}}\label{s3}

In this section we prove Theorem \ref{t1}. Let $S$ be a surface with constant mean curvature $H\not=0$ and given by the implicit equation \eqref{sep}. By the previous section, we know that if $S$ is a translation surface, then $S$ is a circular cylinder, in particular, a Delaunay surface. 

Assume now that $S$ is not a translation surface. The proof of Theorem \ref{t1} is by contradiction. Suppose that $S$ is not a surface of revolution. We use the notation of Section \ref{s2}. Because $S$ is not a right cylinder,  we know that $f'g'h'\not=0$ everywhere on $\Omega=I_1\times I_2\times I_3$. Let us take the new variables $u$, $v$ and $w$ defined in \eqref{uvw}.

Since $S$ is not a translation surface neither a rotational surface,  we have
\begin{equation}\label{31}
(X'-Y')(Y'-Z')(Z'-X')\not=0
\end{equation}
in an open set of $\Omega$ by Proposition \ref{pr1}.

We write equation \eqref{eq2} as
\begin{equation}\label{eq3}
(X+Y)Z'+(X'+Y')Z+X'Y+XY'=-4H(X+Y+Z)^{3/2}.
\end{equation}
Applying Lemma \ref{le1} to this equation for $u$ and $v$, we obtain
$$
(X'-Y')Z'+(X''-Y'')Z+X''Y-XY''=-6H(X'-Y')(X+Y+Z)^{1/2}.
$$
By using this equation and equation   \eqref{eq3}, we eliminate $Z'$  and we find
\begin{eqnarray*}
&&\left(X'^2-Y'^2-(X+Y)(X''-Y'')\right)Z+(X'-Y')(X'Y+XY')\\
&&-(X+Y)(X''Y-XY'')\\
&&=2H(X'-Y')(X+Y-2Z)(X+Y+Z)^{1/2}
\end{eqnarray*}
or 
\begin{equation}\label{pz}
PZ+Q=2H(X'-Y')(X+Y-2Z)(X+Y+Z)^{1/2},
\end{equation}
where we have set
\begin{eqnarray*}
P&=&X'^2-Y'^2-(X+Y)(X''-Y'')\\
Q&=&(X'-Y')(X'Y+XY')-(X+Y)(X''Y-XY'').
\end{eqnarray*}
The squared of equation \eqref{pz} gives 
\begin{eqnarray*}
&&16H^2(X'-Y')^2Z^3-P^2Z^2-\big(12H^2(X+Y)^2(X'-Y')^2+2PQ\big)Z\\
&&+4H^2(X+Y)^3(X'-Y')^2-Q^2=0
\end{eqnarray*}
or 
\begin{equation}\label{eq6}
LZ^3+MZ^2+NZ+R=0,
\end{equation}
where for convenience, we have set
\begin{eqnarray*}
L&=&16H^2(X'-Y')^2\\
 M&=&-P^2\\
 N&=&- 12H^2(X+Y)^2(X'-Y')^2-2PQ\\
 R&=&4H^2(X+Y)^3(X'-Y')^2-Q^2
\end{eqnarray*}

We write equation \eqref{eq6} as
\begin{equation}\label{z3}
Z^3+\frac{M}{L}Z^2+\frac{N}{L}Z+\frac{R}{L}=0
\end{equation}
because $L\not=0$ by \eqref{31}. This is a polynomial equation in $Z$ and thus its coefficients depend on the roots $Z_1(w)$, $Z_2(w)$, $Z_3(w)$. Hence, these coefficients  are functions of one variable $w=-(u+v)$. Set 
\begin{eqnarray*}
&&\Phi_1(u+v)=\frac{M(u,v)}{L(u,v)}\\
&&\Phi_2(w+v)=\frac{N(u,v)}{L(u,v)}\\
&&\Phi_3(u+v)=\frac{R(u,v)}{L(u,v)}.
\end{eqnarray*}
With this notation, we have from \eqref{z3} that 
$$
Z^3+\Phi_1 Z^2+\Phi_2 Z+\Phi_3=0.
$$
We apply Lemma \ref{le1} to this equation for $w$, $u$, obtaining 
\begin{equation}\label{eq8}
(3Z^2+2\Phi_1 Z+\Phi_2)Z'=\Phi_1'Z^2+\Phi_2' Z+\Phi_3'.
\end{equation} 
From \eqref{eq3}, we get $Z'$, namely,  
$$Z'=-\frac{1}{X+Y}\left((X'+Y')Z+X'Y+XY'+4H(X+Y+Z)^{3/2}\right)$$
and we insert this expression for $Z'$ in \eqref{eq8}. It follows that   
\begin{eqnarray*}
&&(3Z^2+2\Phi_1 Z+\Phi_2)\Big((X'+Y')Z+X'Y+XY'+4H(X+Y+Z)^{3/2}\Big)\\
=&&-(X+Y)(\Phi_1'Z^2+\Phi_2'Z+\Phi_3')
\end{eqnarray*}
or equivalently
\begin{eqnarray*}
&&(3Z^2+2\Phi_1Z+\Phi_2)\Big((X'+Y')Z+X'Y+XY'\Big)\\
&&+(X+Y)\Big(\Phi_1' Z^2+\Phi_2' Z+\Phi_3'\Big)\\
&=&-4H(3Z^2+2\Phi_1 Z+\Phi_2)(X+Y+Z)^{3/2}.
\end{eqnarray*}
The squared of this equation gives 
\begin{eqnarray*}
&&(3Z^2+2\Phi_1 Z+\Phi_2)^2\Big( (X'+Y')Z+X'Y+XY'\Big)^2\\
&&+(X+Y)^2(\Phi_1'Z^2+\Phi_2'Z+\Phi_3')^2\\
&&+2(X+Y)\Big((X'+Y')Z+X'Y+XY'\Big)(3Z^2+2\Phi_1Z+\Phi_2)(\Phi_1' Z^2+\Phi_2'Z+\Phi_3')\\
&&=16 H^2(3Z^2+2\Phi_1 Z+\Phi_2)^2(X+Y+Z)^3.
\end{eqnarray*}
 This equation is a polynomial equation in $Z$ of degree $7$, that is, $\sum_{n=0}^7 a_n Z^n=0$. The coefficient $a_7$ is $a_7=144H^2$ a nonzero constant. So, all other coefficients depend on  the roots of   equation. Thus these  are functions of   $w=-(u+v)$. The computation of the coefficient $a_6$ leads to  
$$  a_6=16H^2(27(X+Y)+12\Phi_1)-9(X'+Y')^2.$$
 We apply Lemma \ref{le1} to the above equation differentiating with respect to $u$ and $v$. Then $(a_6)_u-(a_6)_v=0$ is 
   \begin{equation}\label{eq10}
 24H^2(X'-Y')-(X'+Y')(X''-Y'')=0.
 \end{equation}
 Notice that  $X'+Y'\not=0$, otherwise we have from \eqref{eq10} that  $X'-Y'=0$,  a contradiction by \eqref{31}.  Hence 
 \begin{equation}\label{eq11}
 24H^2\frac{X'-Y'}{X'+Y'}=X''-Y''.
 \end{equation}
This equation depends on the variables $u$ and $v$, which are  independent in this equation. Thus, the derivative of \eqref{eq11} with respect to $u$ is  
 $$48 H^2\frac{X''Y'}{(X'+Y')^2}=X''',$$
 and the derivative of this equation with respect to $v$ yields
  $$48H^2(X'-Y')\frac{X''Y''}{(X'+Y')^3}=0. $$
 From this equation and \eqref{31}, we deduce $X''Y''=0$.  
 
 In the arguments after equation \eqref{eq3}, we change the roles of $X$ and $Y$  by $Y$ and $Z$ and, finally by $X$ and $Z$.  In a similar way, we find $X''Z''=Y''Z''=0$, and so 
$$ X''Y''=Y''Z''=Z''X''=0.$$
 It follows that   at least two of the functions $X''$, $Y''$, $Z''$ must be zero. If, for instance, $X''=Y''=0$, then   equation \eqref{eq10} implies  
 $24H^2(X'-Y')=0$, which is a contradiction by \eqref{31} and   because $H\not=0$. 
 
 For the other cases, namely, $Y''=Z''=0$ and $X''=Z''=0$, we use the analogous relation of \eqref{eq10}, obtaining in each case a contradiction. This completes the  proof of Theorem \ref{t1}.

\begin{remark} From the proof of Theorem \ref{t1} we conclude that the rotation axis is parallel to one of the coordinate axes or, in case that the axis is not parallel to the coordinate axes, then the surface is a circular cylinder. This case appears when the surface is of translation type and the axis of the circular cylinder is arbitrary: see equation \eqref{cyl}.
\end{remark}

\bibliographystyle{amsplain}

\begin{thebibliography}{00}

\bibitem{ca} Arthur Cayley, \emph{On a special surface of minimum area}, Quart. J. P. Appl. Math. {\bf 14} (1877), 190--196.

\bibitem{de} C. Delaunay, \emph{Sur la surface de r\'evolution dont la courbure moyenne est constante}, J. Math. Pures et Appl., (1) {\bf 6} (1841) 309--320.
 
\bibitem{fr1} Maurice Fr\'echet,  
\emph{D\'etermination des surfaces minima du type $a(x)+b(y)=c(z)$},  Rend. Circ. Mat. Palermo   {\bf 5} (1956), 238--259.

\bibitem{fr2} Maurice Fr\'echet,  
\emph{ D\'etermination des surfaces minima du type $a(x)+b(y)=c(z)$}, Rend. Circ. Mat. Palermo  {\bf 6} (1957),  5--32.

  
 
 

\bibitem{li} Huili   Liu, \emph{Translation surfaces with constant mean curvature in 3-dimensional spaces},  J. Geom.  {\bf 64}  (1999), no. 1-2, 141--149.

 


 \bibitem{ni} Johannes C. C. Nitsche,  \emph{Lectures on Minimal Surfaces},  Cambridge University Press, Cambridge, 1989.

\bibitem{sc} Heinrich F. Scherk, \emph{Bemerkungen \"{u}ber die kleinste Fl\"{a}che innerhalb gegebener Grenzen},   J. Reine Angew. Math.  {\bf 13} (1835), 185--208. 

\bibitem{schwarz}  Hermann A. Schwarz, \emph{Gesammelte mathematische Abhandlungen},   2 vols. Springer, Berlin, 1890.

 
\bibitem{we} Julius Weingarten, \emph{Ueber die durch eine Gleichung der Form $\mathfrak{X}+\mathfrak{Y}+\mathfrak{Z}=0$ darstellbaren
Minimalfl\"{a}chen},   Nachr. K\"{o}nigl. Ges. d. Wissensch. Univ. G\"{o}ttingen (1887), 272--275.
 

\end{thebibliography}

\end{document}